\documentclass[11pt]{article}
\usepackage[centertags]{amsmath}
\usepackage{amsfonts}
\usepackage{amssymb}
\usepackage{amsthm}
\usepackage{newlfont}

\usepackage{enumerate}
\usepackage{color}


\hfuzz2pt 
\newlength{\defbaselineskip}
\setlength{\defbaselineskip}{\baselineskip}
\newcommand{\setlinespacing}[1]%
           {\setlength{\baselineskip}{#1 \defbaselineskip}}



%

\theoremstyle{plain}
\newtheorem{thm}{Theorem}
\newtheorem{cor}[thm]{Corollary}

\newtheorem{prop}[thm]{Proposition}
\theoremstyle{definition}
\newtheorem{defn}{Definition}
\theoremstyle{remark}
\newtheorem{rem}{\noindent Remark}
\numberwithin{equation}{section}

\theoremstyle{definition}

\renewcommand{\abstractname}{Abstract}

\begin{document}

\title{Conformal polynomial parameterizations}

\author{David P\'erez Fern\'andez\\
  Mathematics Department,\\
  Universidad Aut\'onoma de Madrid,\\
  \texttt{david.perez@uam.es}}

\date{May, 2012}

\maketitle

\renewcommand{\abstractname}{Abstract}
\begin{abstract}
The current paper discusses some new results about conformal polynomic surface parameterizations. A new theorem is proved: Given a conformal polynomic surface parameterization of any degree it must be harmonic on each component.\\

As a first geometrical application, every surface that admits a conformal polynomic parameterization must be a minimal surface. This is not the case for rational conformal polynomic parameterizations, where the conformal condition does not imply that components must be harmonic.\\

Finally, a new general theorem is established for conformal polynomic parameterizations of $m$-dimensional hypersurfaces, $m > 2$, in $\mathbb{R}^n$, with $n>m$: The only conformal polynomic parameterizations of a $m$-dimensional hypersurfaces, in  $\mathbb{R}^n$, with $m > 2$ and $n \geq m$, must be formed by lineal polynomials, i.e. the parameter must be a rotation, scale transformation, reflection or translation of the usual cartesian framework.\\
\end{abstract}

\newpage
\section{Conformal polynomial parameterizations}
\subsection{Harmonic and homogeneous polynomials}

\begin{defn}
A surface parameterization is conformal if satisfies the following condition:
\begin{equation}\label{cond_conformes_cartesianas}
\left\{
  \begin{array}{ll}
   &\bar{X}_x \cdot \bar{X}_x  - \bar{X}_y \cdot \bar{X}_y = 0 \\
   &\bar{X}_x \cdot \bar{X}_y = 0
  \end{array}
\right.
\end{equation}

In polar coordinates the condition is:
\begin{equation}\label{cond_conformes_polares}
\left\{
  \begin{array}{ll}
   &r^2 \bar{X}_r \cdot \bar{X}_r  - \bar{X}_{\theta} \cdot \bar{X}_{\theta} = 0 \\
   &r \bar{X}_r \cdot \bar{X}_{\theta} = 0
  \end{array}
\right.
\end{equation}
\end{defn}

\begin{defn}
A polynomial $p$ is said to be \emph{homogeneous} when all the monomial components has the same degree. In other words, $p$ is a homogeneous polynomial of degree $k$ in $\mathbb{R}^n$ if it has the following form:
\begin{equation*}
p(x_1,x_2, \ldots, x_n) = \sum_i a_i x_1^{\alpha_1^i} x_2^{\alpha_2^i} \cdots x_n^{\alpha_n^i}
\end{equation*}
where summation contains all combinations that satisfy $\sum_{j=1}^n \alpha_j^i = k \hspace{3mm} \forall i$.
\end{defn}
The set of $n$ variables \emph{homogeneous polynomials} will be denoted by $\mathcal{P}(\mathbb{R}^n)$ and the set of homogeneous polynomials of degree $i$ by $\mathcal{P}_i(\mathbb{R}^n)$.\\

\begin{defn}
A polynomial $p$ is said to be \emph{harmonic} when its laplacian is null, i.e. $\Delta p =0$.
\end{defn}
$\mathcal{H}(\mathbb{R}^n)$ will denote the \emph{harmonic homogeneous polynomials} set in $\mathbb{R}^n$ and $\mathcal{H}_i(\mathbb{R}^n)$ symbolize the harmonic homogeneous polynomial set of degree $i$.\\

The following known result, based on a more general one proved by Ernst Fischer in 1917 \cite{Fis}, will be used:\\
\begin{thm}\label{th_desc_homog}
Every homogeneous polynomial could be uniquely decomposed as a sum of harmonic homogeneous polynomials multiplied by $r^2$ powers.\\

More explicitly, every $m$ degree homogeneous polynomial $p \in \mathcal{P}_m(\mathbb{R}^n)$, could be decomposed as:
\begin{equation*}
p = h_m + r^2 h_{m-2} + \cdots + r^{2s} h_{m-2s}
\end{equation*}
where $s = [\frac{m}{2}]$ ([ ] is the integer part operator), and $r^2$ is the square of the position vector module in $\mathbb{R}^n$, $r^2 = x_1^2 + \cdots + x_n^2 = |x|^2$, and every $h_i$ are harmonic homogeneous polynomials of degree $i$, $h_i \in \mathcal{H}_i(\mathbb{R}^n)$.
\end{thm}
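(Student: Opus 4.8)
The plan is to obtain the decomposition as an iterated orthogonal splitting of $\mathcal{P}_m(\mathbb{R}^n)$ with respect to the \emph{Fischer (apolar) inner product}, and then to read off both existence and uniqueness from the resulting direct-sum structure. The one computation that makes everything work is that, for this inner product, the Laplacian is the adjoint of multiplication by $r^2$.

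First I would set up the inner product. On the finite-dimensional space $\mathcal{P}_m(\mathbb{R}^n)$ define $\langle p,q\rangle := \bigl(p(\partial)\,q\bigr)(0)$, where $p(\partial)$ is the constant-coefficient differential operator obtained from $p$ by replacing each $x_j$ with $\partial/\partial x_j$. On the monomial basis one gets $\langle x^\alpha,x^\beta\rangle = \alpha!\,\delta_{\alpha\beta}$, so the monomials are orthogonal with positive squared norms and $\langle\cdot,\cdot\rangle$ is a genuine inner product. Next, writing $r^2 = x_1^2+\cdots+x_n^2$ and noting that multiplication by $x_j^2$ corresponds under $p\mapsto p(\partial)$ to $\partial_j^2$, which commutes with every $p(\partial)$, one obtains for $p\in\mathcal{P}_{m-2}(\mathbb{R}^n)$ and $q\in\mathcal{P}_m(\mathbb{R}^n)$ the key identity
\begin{equation*}
\langle r^2 p,\,q\rangle \;=\; \langle p,\,\Delta q\rangle .
\end{equation*}
Consequently the subspace $r^2\,\mathcal{P}_{m-2}(\mathbb{R}^n)\subseteq\mathcal{P}_m(\mathbb{R}^n)$ has orthogonal complement precisely $\{q:\langle p,\Delta q\rangle=0\ \forall p\}=\ker\Delta=\mathcal{H}_m(\mathbb{R}^n)$, whence
\begin{equation*}
\mathcal{P}_m(\mathbb{R}^n) \;=\; \mathcal{H}_m(\mathbb{R}^n)\ \oplus\ r^2\,\mathcal{P}_{m-2}(\mathbb{R}^n)
\end{equation*}
as an orthogonal — in particular, a direct — sum. (For $m=0$ and $m=1$ this is vacuous: every such polynomial is already harmonic, and $\mathcal{P}_{m-2}$ is read as $\{0\}$.)

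Iterating this one-step splitting finishes the proof. Given $p\in\mathcal{P}_m(\mathbb{R}^n)$, write $p=h_m+r^2 p_{m-2}$ with $h_m\in\mathcal{H}_m(\mathbb{R}^n)$ and $p_{m-2}\in\mathcal{P}_{m-2}(\mathbb{R}^n)$; apply the splitting to $p_{m-2}$, and continue. After $s=[m/2]$ steps the running remainder is homogeneous of degree $0$ or $1$, hence harmonic, and collecting terms yields $p=h_m+r^2 h_{m-2}+\cdots+r^{2s}h_{m-2s}$ with each $h_i\in\mathcal{H}_i(\mathbb{R}^n)$. Uniqueness is automatic: at each stage $\mathcal{H}_m(\mathbb{R}^n)\cap r^2\mathcal{P}_{m-2}(\mathbb{R}^n)=\{0\}$ (directness of the sum, or directly: if $h=r^2 q$ is harmonic then $\langle h,h\rangle=\langle q,\Delta h\rangle=0$, so $h=0$), and multiplication by $r^2$ is injective on polynomials, so each homogeneous piece $h_{m-2k}$ is forced.

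The main obstacle — really the only non-bookkeeping part — is establishing the two ingredients used above: positive-definiteness of the Fischer form and the adjointness identity $\langle r^2 p,q\rangle=\langle p,\Delta q\rangle$; both reduce to short monomial computations together with the commutativity of constant-coefficient differential operators. If one prefers to avoid inner products, the alternative is to show directly that the linear endomorphism $q\mapsto\Delta(r^2 q)$ of $\mathcal{P}_{m-2}(\mathbb{R}^n)$ is bijective (using $\Delta(r^2 q)=r^2\Delta q+(2n+4(m-2))\,q$ for $q$ homogeneous of degree $m-2$, plus induction on the degree to exclude a nonzero kernel) and then to solve $\Delta\!\left(p-r^2 q\right)=0$ for $q$; but proving injectivity of that map is essentially the same step in disguise. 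Since the statement is due to Fischer and is used here only as a tool, citing \cite{Fis} in place of the argument is also legitimate.
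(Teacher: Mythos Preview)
Your argument is correct. The paper does not supply a proof of this theorem at all; it simply records the result and defers to \cite{Alx}, Theorem~5.7, noting also that the decomposition goes back to Fischer \cite{Fis}. What you have written is precisely the classical Fischer argument: the apolar inner product $\langle p,q\rangle=p(\partial)q\big|_0$ makes $\Delta$ the adjoint of multiplication by $r^2$, whence $\mathcal{P}_m(\mathbb{R}^n)=\mathcal{H}_m(\mathbb{R}^n)\oplus r^2\mathcal{P}_{m-2}(\mathbb{R}^n)$ orthogonally, and iteration plus injectivity of $q\mapsto r^2q$ gives both existence and uniqueness. This is in the same spirit as the reference the paper cites (Axler--Bourdon--Ramey establish exactly this one-step splitting as their Proposition~5.5 before deducing the full decomposition in Theorem~5.7), so your write-up is not a genuinely different route but rather a self-contained version of what the paper outsources. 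Your closing remark that a citation would also suffice matches the paper's own choice.
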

The proof of this theorem could be seen in \cite{Alx}, theorem 5.7.\\

In terms of polynomial spaces, the homogeneous polynomial space admits the decomposition in harmonic polynomial spaces, given by:
\begin{equation*}
\mathcal{P}_m(\mathbb{R}^n) = \mathcal{H}_m(\mathbb{R}^n) \oplus r^2 \mathcal{H}_{m-2}(\mathbb{R}^n) \oplus \cdots \oplus r^{2s}\mathcal{H}_{m-2s}(\mathbb{R}^n)
\end{equation*}
where $s = [\frac{m}{2}]$.\\

In the cited reference, \cite{Alx}, could be seen a proof of the following proposition about the harmonic polynomial space dimension:\\

\begin{prop}
If $m>2$ then:
\begin{equation}\label{dim_espacio_armonico}
dim \hspace{2mm} \mathcal{H}_m(\mathbb{R}^n) =
\left( {\begin{array}{cc}
 n+m-1  \\
 n-1  \\
 \end{array} } \right)
 -
\left( {\begin{array}{cc}
 n+m-3  \\
 n-1  \\
 \end{array} } \right)
\end{equation}
\end{prop}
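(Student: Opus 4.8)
The plan is to obtain the formula directly from the Fischer-type decomposition of Theorem~\ref{th_desc_homog}, reducing everything to the elementary count of monomials. First I would record the dimension of the full homogeneous polynomial space: a basis of $\mathcal{P}_k(\mathbb{R}^n)$ is the set of monomials $x_1^{\alpha_1}\cdots x_n^{\alpha_n}$ with $\alpha_1+\cdots+\alpha_n=k$ and $\alpha_j\ge 0$, so $\dim\mathcal{P}_k(\mathbb{R}^n)$ is the number of such multi-indices, which a standard stars-and-bars argument evaluates to $\binom{n+k-1}{n-1}$. In particular $\dim\mathcal{P}_m(\mathbb{R}^n)=\binom{n+m-1}{n-1}$ and $\dim\mathcal{P}_{m-2}(\mathbb{R}^n)=\binom{n+m-3}{n-1}$.

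Next I would apply Theorem~\ref{th_desc_homog} to degrees $m$ and $m-2$ and compare the two decompositions. Since $[\tfrac{m-2}{2}]=[\tfrac m2]-1$, multiplying the decomposition of $\mathcal{P}_{m-2}(\mathbb{R}^n)$ by $r^2$ yields exactly the tail $r^2\mathcal{H}_{m-2}(\mathbb{R}^n)\oplus r^4\mathcal{H}_{m-4}(\mathbb{R}^n)\oplus\cdots\oplus r^{2s}\mathcal{H}_{m-2s}(\mathbb{R}^n)$ of the decomposition of $\mathcal{P}_m(\mathbb{R}^n)$, so that
\[
\mathcal{P}_m(\mathbb{R}^n)=\mathcal{H}_m(\mathbb{R}^n)\ \oplus\ r^2\,\mathcal{P}_{m-2}(\mathbb{R}^n),
\]
the sum being direct by the uniqueness part of Theorem~\ref{th_desc_homog}. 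Because the polynomial ring is an integral domain, multiplication by $r^2=x_1^2+\cdots+x_n^2$ is an injective linear map, hence an isomorphism of $\mathcal{P}_{m-2}(\mathbb{R}^n)$ onto $r^2\mathcal{P}_{m-2}(\mathbb{R}^n)$; therefore $\dim\big(r^2\mathcal{P}_{m-2}(\mathbb{R}^n)\big)=\dim\mathcal{P}_{m-2}(\mathbb{R}^n)$. Taking dimensions in the displayed direct sum then gives
\[
\dim\mathcal{H}_m(\mathbb{R}^n)=\dim\mathcal{P}_m(\mathbb{R}^n)-\dim\mathcal{P}_{m-2}(\mathbb{R}^n)=\binom{n+m-1}{n-1}-\binom{n+m-3}{n-1},
\]
which is the claimed formula. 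Equivalently, one may unwind the whole decomposition, obtain $\dim\mathcal{P}_m(\mathbb{R}^n)=\sum_{j\ge 0}\dim\mathcal{H}_{m-2j}(\mathbb{R}^n)$, and subtract the same identity for $m-2$; the sums telescope to the same answer.

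There is no serious obstacle once Theorem~\ref{th_desc_homog} is available; the only point that needs a little care is the index bookkeeping when matching the tail of the degree-$m$ decomposition with $r^2$ times the degree-$(m-2)$ decomposition (checking that the powers of $r^2$ and the degrees of the harmonic factors line up). The hypothesis $m>2$ is not essential for the argument itself — it merely guarantees $m-2\ge 1$, so that $\binom{n+m-3}{n-1}$ is literally the dimension of a genuine nontrivial polynomial space — and with the usual conventions on binomial coefficients the formula remains valid for $m\le 2$ as well.
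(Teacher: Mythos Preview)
Your argument is correct. Note, however, that the paper does not supply its own proof of this proposition: it simply refers the reader to \cite{Alx} (Axler--Bourdon--Ramey, \emph{Harmonic Function Theory}). The argument you give --- using the Fischer decomposition (Theorem~\ref{th_desc_homog}) to write $\mathcal{P}_m(\mathbb{R}^n)=\mathcal{H}_m(\mathbb{R}^n)\oplus r^2\mathcal{P}_{m-2}(\mathbb{R}^n)$ and then subtracting the stars-and-bars monomial counts --- is exactly the standard proof found in that reference (Proposition~5.8 there, derived from Theorem~5.7), so there is nothing to contrast.
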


In order to study conformal surface parameterizations only harmonic polynomials of two variables, $\mathcal{H}_m(\mathbb{R}^2)$ will be used. Following the above proposition, the harmonic polynomial base of any degree is always compound by two elements.\\

\begin{rem}\label{fourier}
The harmonic two variables $k$ degree polynomial space, $\mathcal{H}^k(\mathbb{R}^2)$, could be expressed as the real and imaginary part of $z^k$, where $z \in \mathbb{C}$. A base of the space $\mathcal{H}^k(\mathbb{R}^2)$ is $\{\hbox{Re}(z^k), \hbox{Im}(z^k)\}$. In other words, every $\mathcal{H}^k(\mathbb{R}^2)$ element is a lineal combination of  $\{z^m, \overline{z^m}\}$.\\

One harmonic homogeneous polynomial base in polar coordinates is:
\begin{equation}\label{base_fourier}
 \mathcal{H}^k(\mathbb{R}^2) = \{r^k \sin{k \theta}, r^k \cos{k \theta}\}
\end{equation}
this is the known Fourier base and the decomposition exposed in the theorem \ref{th_desc_homog}, applied in the two variable case, is the Fourier series expansion of any homogeneous polynomial.\\
\end{rem}

The two elements of the standard basis of $\mathcal{H}^m(\mathbb{R}^2)$ will be denoted by $\{h_1^m, h_2^m\}$.

\begin{rem}\label{notacion_vect}
The harmonic polynomial decomposition stated by theorem \ref{th_desc_homog} and a vector coefficient notation will be used. The next example tries to clarify the notation.\\

The standard basis of $\mathcal{H}^i(\mathbb{R}^2), \hspace{2mm} i=1,2,3$, are the next harmonic homogeneous polynomial pairs:
\begin{eqnarray*}
&& \mathcal{H}^1(\mathbb{R}^2) = \{h_1^1(x,y),h_2^1(x,y)\} = \{x,y\}\\
&& \mathcal{H}^2(\mathbb{R}^2) = \{h_1^2(x,y),h_2^2(x,y)\} =\{x^2 - y^2, 2xy\}\\
&& \mathcal{H}^3(\mathbb{R}^2) = \{h_1^3(x,y),h_2^3(x,y)\} = \{x^3 - 3xy^2, 3xy^2 - y^3\}
\end{eqnarray*}

The Enneper minimal surface has polynomic components and also is conformal. This surface could be expressed in terms of vector coefficients multiplied by harmonic base elements as:
\begin{eqnarray*}
&& \bar{\psi}(x,y) =  \big(x - x^3/3 + xy^2,  -y + y^3/3 - x^2 y,  x^2 - y^2 \big) = \\
&& = \bar{\lambda} h_1^3(x,y) + \bar{\beta} h_2^3(x,y) + \bar{\gamma} h_1^2(x,y) + \bar{\mu} h_1^1(x,y) + \bar{\nu} h_2^1(x,y)
\end{eqnarray*}
where:
\begin{equation*}
\bar{\lambda} =
\left( {\begin{array}{cc}
 -1/3  \\
 0  \\
 0  \\
 \end{array} } \right)
,\hspace{3mm}
\bar{\beta} =
\left( {\begin{array}{cc}
 0  \\
 -1/3  \\
 0  \\
 \end{array} } \right)
,\hspace{3mm}
\bar{\gamma} =
\left(
  \begin{array}{cc}
  0 \\
  0 \\
  1 \\
  \end{array}
\right)
,\hspace{3mm}
\bar{\mu} =
\left(
  \begin{array}{cc}
  1 \\
  0 \\
  0 \\
  \end{array}
\right)
,\hspace{3mm}
\bar{\nu} =
\left(
  \begin{array}{cc}
  0 \\
  -1 \\
  0 \\
  \end{array}
\right)
\end{equation*}
\end{rem}

The notation for the angular component of the $k$ degree Fourier basis elements will be:
\begin{equation*}
\bar{f}_k = \bar{v}_k \sin{k \theta} + \bar{•}_k \cos{k \theta}
\end{equation*}
where the radial factor, $r^k$, is deliberately eliminated.\\

The next definition will also be used:
\begin{equation*}
\bar{f}'_k = \bar{v}_k \cos{k \theta} - \bar{w}_k \sin{k \theta} = \frac1{k}\frac{d \bar{f}_k}{d\theta}
\end{equation*}

The next relations are consequences of the Fourier basis orthogonality properties in the unit circle $S_1$, ($r=1; \hspace{2mm} 0 \leq \theta \leq 2\pi$):
\begin{eqnarray}\label{orthog_prop}
&& \int_0^{2\pi}{|\bar{f}_k|^2} d\theta= \int_0^{2\pi}{|\bar{f}'_k|^2} d\theta= \pi(|\bar{v}_k|^2 + |\bar{w}_k|^2)\\
&& \int_0^{2\pi}{\bar{f}_k \cdot \bar{f}_i} d\theta = \int_0^{2\pi}{\bar{f}'_k \cdot \bar{f}'_i} d\theta = \pi (|\bar{v}_k|^2 + |\bar{w}_k|^2) \delta_{k i}\\
&& \int_0^{2\pi}{\bar{f}_k \cdot \bar{f}'_i} d\theta = 0\\
&& \int_0^{2\pi}{\bar{f}_k \cdot \bar{g}_k} d\theta = \int_0^{2\pi}{\bar{f}'_k \cdot \bar{g}'_k} d\theta = \pi (\bar{v}_k \cdot \bar{o}_k + \bar{w}_k \cdot \bar{q}_k)
\end{eqnarray}
where $\bar{o}_k$ and $\bar{q}_k$ are the vector coefficients of $\bar{g}_k$, the angular component of another harmonic polynomial.\\

\subsection{A conformal polynomial parameterization theorem}

\begin{thm}\label{th_param_conf_armonica}
Every conformal polynomial surface parameterization, embedded in $\mathbb{R}^n$, must be harmonic.\\

The general form of any conformal polynomial parameterization of $k$ degree in polar coordinates is:
\begin{equation*}
\bar{X} = \sum_{i=0}^k r^i (\bar{v}_i \sin{i \theta} + \bar{w}_i \cos{i \theta})
\end{equation*}
or using cartesian coordinates:
\begin{equation*}
\bar{X} = \sum_{i=0}^k \bar{v}_i \hbox{Re}(z^i) +  \bar{v}_i\hbox{Im}(z^i) = \sum_{i=0}^k \bar{v}_i h_1^i +  \bar{v}_i h_2^i
\end{equation*}
where $\{h_1^i, h_2^i\}$ are elements of a $\mathcal{H}_i(\mathbb{R}^2)$ base.\\

Also the vector coefficients of maximum, $j=k$, and minimum degree, $j=1$, must satisfy:
\begin{eqnarray*}
\left\{
  \begin{array}{ll}
   &|\bar{v}_j| = |\bar{w}_j| \\
   &\bar{v}_j \cdot \bar{w}_j = 0
  \end{array}
\right.
\end{eqnarray*}
\end{thm}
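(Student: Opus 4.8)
The plan is to extract harmonicity from a single scalar identity obtained by averaging the first conformality condition over the circles $r=\mathrm{const}$, and then to recover the extreme-degree relations by feeding the resulting special form back in. First I would split $\bar X=\sum_{d=0}^{k}\bar p_d$ into its $\mathbb R^n$-valued homogeneous parts $\bar p_d$ of degree $d$ and pass to polar coordinates, $\bar p_d=r^{d}\bar q_d(\theta)$; by Theorem~\ref{th_desc_homog} and Remark~\ref{fourier}, applied componentwise, $\bar q_d$ is a vector trigonometric polynomial whose only frequencies are $d,d-2,d-4,\dots$. Then $\bar X_\theta=\sum_d r^{d}\,\partial_\theta\bar q_d$ and $r\bar X_r=\sum_d d\,r^{d}\bar q_d$, and integrating the first equation of \eqref{cond_conformes_polares} over $\theta\in[0,2\pi]$ yields $r^2\int_0^{2\pi}\bar X_r\cdot\bar X_r\,d\theta=\int_0^{2\pi}\bar X_\theta\cdot\bar X_\theta\,d\theta$ for every $r$. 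Expanding both sides with the orthogonality relations \eqref{orthog_prop} turns this into the polynomial identity in $r$
\[
\sum_{d,e}c_{d,e}\,r^{d+e}=0,\qquad c_{d,e}:=\int_0^{2\pi}\bigl((\partial_\theta\bar q_d)\cdot(\partial_\theta\bar q_e)-de\,\bar q_d\cdot\bar q_e\bigr)\,d\theta .
\]

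Two observations then do the work. First, $c_{d,e}=0$ whenever $d\not\equiv e\pmod 2$, since then $\bar q_d$ and $\bar q_e$ have no common frequency. Second, because every frequency of $\bar q_d$ is at most $d$, \eqref{orthog_prop} gives $c_{d,d}=\int_0^{2\pi}\bigl(|\partial_\theta\bar q_d|^2-d^2|\bar q_d|^2\bigr)\,d\theta\le 0$, with equality exactly when $\bar q_d$ carries only the frequency $d$, i.e.\ (since $\mathcal H^{d}(\mathbb R^2)$ is spanned by $r^{d}\cos d\theta$ and $r^{d}\sin d\theta$) exactly when $\bar p_d$ is harmonic. Now I would induct downward on $d$. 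Comparing coefficients of $r^{2d}$ in the identity above gives $\sum_{d'+e'=2d}c_{d',e'}=0$; supposing $\bar p_{d'}$ is already harmonic for all $d'>d$, every off-diagonal term vanishes, for if $d'\neq e'$ then (by symmetry, say $d'>e'$) one has $d'>d>e'$, so $\bar p_{d'}$ is harmonic and $\bar q_{d'}$ carries the single frequency $d'$, hence is $L^2$-orthogonal to both $\bar q_{e'}$ and $\partial_\theta\bar q_{e'}$, which have frequencies $\le e'<d'$. So the relation collapses to $c_{d,d}=0$, forcing $\bar p_d$ harmonic; the base case $d=k$ is immediate since $d',e'\le k$ leaves only $d'=e'=k$. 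Hence every $\bar p_d$ is harmonic, and in polar coordinates $\bar X=\sum_{d=0}^{k}r^{d}\bigl(\bar v_d\sin d\theta+\bar w_d\cos d\theta\bigr)$, i.e.\ $\bar X$ has exactly the form asserted in the statement; this settles the first two claims.

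For the coefficient conditions I would substitute this form back into the first equation of \eqref{cond_conformes_polares}, writing as above $\bar f_d=\bar v_d\sin d\theta+\bar w_d\cos d\theta$. Using $\partial_\theta\bar f_d=d\,\bar f'_d$ one has $\bar X_\theta=\sum_d d\,r^{d}\bar f'_d$ and $r\bar X_r=\sum_d d\,r^{d}\bar f_d$, so the condition becomes $\sum_{d,e}de\,r^{d+e}\bigl(\bar f_d\cdot\bar f_e-\bar f'_d\cdot\bar f'_e\bigr)=0$. The lowest power of $r$ occurring is $r^{2m}$, with $m\ge1$ the smallest index for which $\bar f_m\neq0$ (so $m=1$ exactly when the parameterization has a nonzero linear part), and its coefficient is $m^2\bigl(\bar f_m\cdot\bar f_m-\bar f'_m\cdot\bar f'_m\bigr)$; since this must vanish for all $\theta$, substituting $\bar f_m=\bar v_m\sin m\theta+\bar w_m\cos m\theta$ and $\bar f'_m=\bar v_m\cos m\theta-\bar w_m\sin m\theta$ turns it into $-(|\bar v_m|^2-|\bar w_m|^2)\cos 2m\theta+2(\bar v_m\cdot\bar w_m)\sin 2m\theta\equiv0$, whence $|\bar v_m|=|\bar w_m|$ and $\bar v_m\cdot\bar w_m=0$. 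The top power $r^{2k}$ carries the coefficient $k^2\bigl(\bar f_k\cdot\bar f_k-\bar f'_k\cdot\bar f'_k\bigr)$ and yields the same two relations for $j=k$.

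I expect the inductive step to be the real obstacle: one has to check carefully that the hypothesis ``$\bar p_{d'}$ harmonic for all $d'>d$'' genuinely annihilates every off-diagonal $c_{d',e'}$ in the $r^{2d}$-relation, since without that cancellation the quadratic form in the intermediate Fourier coefficients need not be sign-definite and the argument breaks. It is also worth recording that only the first equation of \eqref{cond_conformes_polares} is used --- its circle-average produces harmonicity and its highest and lowest $r$-coefficients produce the extreme relations --- so the second conformality equation plays no role in this particular theorem.
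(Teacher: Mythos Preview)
Your argument is correct and follows essentially the same route as the paper: decompose into homogeneous pieces, apply the Fischer/Fourier decomposition to each, plug into the first conformal condition in polar form, integrate over the circle, use Fourier orthogonality, and then induct downward on the degree, exploiting the positivity of the coefficients $d^{2}-(d-2j)^{2}$ to force all non-principal harmonic parts to vanish. Your packaging via the single quantities $c_{d,e}$ and the Wirtinger-type inequality $c_{d,d}\le 0$ (with equality iff $\bar p_d$ is harmonic) is somewhat tidier than the paper's version, but the content --- including the remark that only the first equation of \eqref{cond_conformes_polares} is actually used --- is the same.
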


\begin{proof}[Proof]
The proof is decomposed in the following steps:
\begin{enumerate}
  \item Take a polynomial surface parameterization, in $\mathbb{R}^n$, of maximum degree $k$:
    \begin{equation*}
    \bar{X}(x,y) = (X_1(x,y), \ldots, X_n(x,y))
    \end{equation*}

  \item The polynomial parameterization splits into the sum of homogeneous components of degrees $1, \ldots, k$. The constant terms are neglected because the conformal condition is invariant under surface translations.\\
  
   Using the introduced vector notation, the polynomial surface parameterization takes the form:
    \begin{equation*}
    \bar{X} = \bar{P}_k + \bar{P}_{k-1} + \cdots + \bar{P}_1
    \end{equation*}
    where $\bar{P}_i$ are the vectors of $i$ degree homogeneous polynomials, $P_i \in \mathcal{P}_i(\mathbb{R}^2)$.\\

  \item The decomposition theorem \ref{th_desc_homog} is applied to each homogeneous component:
        \begin{eqnarray*}
        && \bar{P}_k = r^k\big(\bar{f}_k + \bar{g}_{k-2} + \cdots + \bar{h}_{k-2s_k}\big)\\
        && \bar{P}_{k-1} = r^{k-1}\big(\bar{f}_{k-1} + \bar{g}_{k-3} + \cdots + \bar{h}_{k-2s_{k-1}}\big)\\
        && \hspace{5mm} \vdots\\
        && \bar{P}_2 = r^2\big(\bar{f}_2 + \bar{g}_0\big)\\
        && \bar{P}_1 = r \bar{f}_1
        \end{eqnarray*}
where $s_k = [\frac{k}{2}]$ and vectors $\bar{f}_i,\bar{g}_i, \ldots, \bar{h}_i$ symbolize the angular component of the $i$ degree homogeneous polynomials and following the previous polar notation:
        \begin{eqnarray*}
        && \bar{f}_i = \bar{v}_i \sin{i \theta} + \bar{w}_i \cos{i \theta}\\
        && \bar{g}_i = \bar{o}_i \sin{i \theta} + \bar{q}_i \cos{i \theta}\\
        && \hspace{5mm} \vdots\\
        && \bar{h}_i = \bar{t}_i \sin{i \theta} + \bar{u}_i \cos{i \theta}
        \end{eqnarray*}

The $k$ degree polynomial parameterization takes the form:
    \begin{eqnarray*}
        && \bar{X}(r,\theta) = r^k\big(\bar{f}_k + \bar{g}_{k-2} + \cdots + \bar{h}_{k-2s_k}\big) + r^{k-1}\big(\bar{f}_{k-1} + \bar{g}_{k-3} + \cdots + \bar{h}_{k-2s_{k-1}}\big) + \\
        &&\hspace{3mm}+ \ldots + r^2\big(\bar{f}_2 + \bar{g}_0\big) + r \bar{f}_1
    \end{eqnarray*}
    
The maximum order harmonic terms, $\bar{f}_k, \bar{f}_{k-1}, \ldots $ will be called \emph{principal harmonic components} of the harmonic decomposition.\\   

  \item The tangent vectors are expressed in polar coordinates, the $r \bar{X}_r$ components are:
\begin{eqnarray*}
&& r \partial_r(\bar{P}_k) = k r^k[\bar{f}_k + \bar{g}_{k-2} + \cdots + \bar{h}_{k-2s_k}]\\
&& r \partial_r(\bar{P}_{k-1}) = (k-1) r^{k-1}[\bar{f}_{k-1} + \bar{g}_{k-3} + \cdots + \bar{h}_{k-2s_{k-1}}]\\
&& \hspace{5mm} \vdots\\
&& r \partial_r(\bar{P}_2) = 2r^2[\bar{f}_2 + \bar{g}_0]\\
&& r \partial_r(\bar{P}_1) = r \bar{f}_1
\end{eqnarray*}

and $\bar{X}_{\theta}$:
\begin{eqnarray*}
&& \partial_{\theta}(\bar{P}_k) = r^k[k\bar{f}'_k + (k-2)\bar{g}'_{k-2} + \cdots + (k-2s_k)\bar{h}'_{k-2s_k}]\\
&& \partial_{\theta}(\bar{P}_{k-1}) = r^{k-1}[(k-1)\bar{f}'_{k-1} + (k-3)\bar{g}'_{k-3} + \cdots + (k-2s_{k-1})\bar{h}'_{k-2s_{k-1}}]\\
&& \hspace{5mm} \vdots\\
&& \partial_{\theta}(\bar{P}_2) = r^2[2\bar{f}'_2]\\
&& \partial_{\theta}(\bar{P}_1) = r \bar{f}'_1
\end{eqnarray*}

This equalities are replaced on the first conformal parameterization condition (\ref{cond_conformes_polares}):
\begin{eqnarray*}
&& r^{2k}[k^2(\bar{f}_{k} \cdot \bar{f}_{k} - \bar{f}'_{k} \cdot \bar{f}'_{k}) + k^2 \bar{g}_{k-2} \cdot \bar{g}_{k-2} - (k-2)^2\bar{g}'_{k-2} \cdot \bar{g}'_{k-2} + \cdots +\\
&& \hspace{3mm}+ k^2 \bar{h}_{k-2s_k} \cdot \bar{h}_{k-2s_k} - (k-2s_k)^2\bar{h}'_{k-2s_k} \cdot \bar{h}'_{k-2s_k}] + \\
&& + r^{2k-2}[(k-1)^2(\bar{f}_{k-1} \cdot \bar{f}_{k-1} - \bar{f}'_{k-1} \cdot \bar{f}'_{k-1}) + (k-1)^2 \bar{g}_{k-3} \cdot \bar{g}_{k-3} - \\
&& \hspace{3mm}-(k-3)^2 \bar{g}'_{k-3} \cdot \bar{g}'_{k-3} + \cdots + (k-1)^2\bar{h}_{k-2s_{k-1}} \cdot \bar{h}_{k-2s_{k-1}} -\\
&& \hspace{3mm}-(k-2s_{k-1})^2\bar{h}'_{k-2s_{k-1}} \cdot \bar{h}'_{k-2s_{k-1}} + \ldots +\\
&& \hspace{3mm}+k(k-2)\bar{f}_{k} \cdot \bar{g}_{k-4} - (k-2)^2\bar{f}'_{k} \cdot \bar{g}'_{k-4} + \dots ] +\\
&& \hspace{5mm} \vdots\\
&& + r^6[3^2\bar{g}_1 \cdot \bar{g}_1 - \bar{g}'_1 \cdot \bar{g}'_1 + (4 \cdot 2) \bar{g}_2 \cdot \bar{f}_2 - (2 \cdot 2) \bar{g}'_2 \cdot \bar{f}'_2 + 5 \bar{h}_1 \cdot \bar{f}_1 - \bar{h}'_1 \cdot \bar{f}'_1] + \\
&& + r^4[2^2 \bar{g}_0 \cdot \bar{g}_0 - \bar{g}'_0 \cdot \bar{g}'_0 + 3\bar{g}_1 \cdot \bar{f}_1 - \bar{g}'_1 \cdot \bar{f}'_1] + r^2[\bar{f}_{1} \cdot \bar{f}_{1} - \bar{f}'_{1} \cdot \bar{f}'_{1}]\\
&& + \sum_{i=2}^{k}r^{2i-1}[\ldots] = 0
\end{eqnarray*}
The last term groups odd $r$ powers because it will be null when the equation is integrated on the unit circle $S_1$, by the harmonic polynomial orthogonality properties (\ref{orthog_prop}).\\

  \item Coefficients of different $r$ powers must be null simultaneously, because conformal condition must be satified in all the space. The following relations are obtained:
\begin{eqnarray*}
&& k^2(\bar{f}_{k} \cdot \bar{f}_{k} - \bar{f}'_{k} \cdot \bar{f}'_{k}) + k^2 \bar{g}_{k-2} \cdot \bar{g}_{k-2} - (k-2)^2\bar{g}'_{k-2} \cdot \bar{g}'_{k-2} + \cdots +\\
&& \hspace{3mm}+ k^2 \bar{h}_{k-2s_k} \cdot \bar{h}_{k-2s_k} - (k-2s_k)^2\bar{h}'_{k-2s_k} \cdot \bar{h}'_{k-2s_k} = 0\\
&&\\
&& (k-1)^2(\bar{f}_{k-1} \cdot \bar{f}_{k-1} - \bar{f}'_{k-1} \cdot \bar{f}'_{k-1}) + \\
&&\hspace{5mm} + (k-1)^2 \bar{g}_{k-3} \cdot \bar{g}_{k-3} - (k-3)^2 \bar{g}'_{k-3} \cdot \bar{g}'_{k-3} + \cdots \\
&& \cdots + (k-1)^2\bar{h}_{k-2s_{k-1}} \cdot \bar{h}_{k-2s_{k-1}} -(k-2s_{k-1})^2\bar{h}'_{k-2s_{k-1}} \cdot \bar{h}'_{k-2s_{k-1}} + \ldots \\
&& \ldots + k(k-2)\bar{f}_{k} \cdot \bar{g}_{k-4} - (k-2)^2\bar{f}'_{k} \cdot \bar{g}'_{k-4} + \dots =0\\
&& \hspace{5mm} \vdots\\
&& 3^2\bar{g}_1 \cdot \bar{g}_1 - \bar{g}'_1 \cdot \bar{g}'_1 + (4 \cdot 2) \bar{g}_2 \cdot \bar{f}_2 - (2 \cdot 2) \bar{g}'_2 \cdot \bar{f}'_2 + 5 \bar{h}_1 \cdot \bar{f}_1 - \bar{h}'_1 \cdot \bar{f}'_1 = 0 \\
&&\\
&& 2^2 \bar{g}_0 \cdot \bar{g}_0 - \bar{g}'_0 \cdot \bar{g}'_0 + 3\bar{g}_1 \cdot \bar{f}_1 - \bar{g}'_1 \cdot \bar{f}'_1 = 0 \\
&&\\
&& \bar{f}_{1} \cdot \bar{f}_{1} - \bar{f}'_{1} \cdot \bar{f}'_{1}= 0 \\
&&\\
&& [\text{Odd $r$ powers}] = 0
\end{eqnarray*}

  \item Now each equation is integrated on the unit circle.\\
   Using the cited Fourier basis orthogonality properties, (\ref{orthog_prop}), the last term, corresponding to products of principal harmonic components, have identical coefficients $k^2, (k-1)^2, \ldots$:
    \begin{equation*}
       \int_0^{2\pi} \bar{f}_{k} \cdot \bar{f}_{k} - \bar{f}'_{k} \cdot \bar{f}'_{k} d\theta = \int_0^{2\pi} \bar{f}_{k-1} \cdot \bar{f}_{k-1} - \bar{f}'_{k-1} \cdot \bar{f}'_{k-1} d\theta = 0
    \end{equation*}

   Harmonic component products of different degree are null (it includes all the odd powers of $r$) because of the Fourier basis orthogonality properties (\ref{orthog_prop}):
    \begin{equation*}
       \int_0^{2\pi} \bar{f}_{m} \cdot \bar{f}_{n} d\theta = \int_0^{2\pi} \bar{f}'_{m} \cdot \bar{f}'_{n} d\theta = 0
    \end{equation*}
    with $m \neq n$.\\

The equations could be simplified and take the form:
\begin{eqnarray*}
&& (k^2 - (k-2)^2) \int_0^{2\pi} \bar{g}_{k-2} \cdot \bar{g}_{k-2} d\theta + \cdots + \\
&& \hspace{3mm}+(k^2 - (k-2s_k)^2)\int_0^{2\pi} \bar{h}_{k-2s_k} \cdot \bar{h}_{k-2s_k}d\theta = 0 \\
&&\\
&& ((k-1)^2 - (k-3)^2)\int_0^{2\pi} \bar{g}_{k-3} \cdot \bar{g}_{k-3} d\theta + \cdots + \\
&& \hspace{3mm}+((k-1)^2 - (k-2s_{k-1})^2)\int_0^{2\pi} \bar{h}_{k-2s_{k-1}} \cdot \bar{h}_{k-2s_{k-1}}d\theta + \ldots +\\
&& \hspace{3mm}+(k (k-2) - (k-2)^2)\int_0^{2\pi}\bar{g}_{k-2} \cdot \bar{f}_{k-2}d\theta + \dots = 0\\
&& \hspace{5mm} \vdots \\
&& 8 \int_0^{2\pi} \bar{g}_1 \cdot \bar{g}_1 d\theta + 4 \int_0^{2\pi}\bar{g}_2 \cdot \bar{f}_2 d\theta + 4 \int_0^{2\pi}\bar{h}_1 \cdot \bar{f}_1 d\theta = 0\\
&&\\
&& 4 \int_0^{2\pi} \bar{g}_0 \cdot \bar{g}_0 d\theta + 2 \int_0^{2\pi} \bar{g}_1 \cdot \bar{f}_1 d\theta = 0
\end{eqnarray*}

   There are only two types of products of the same degree:
	\begin{itemize}
	  \item Terms corresponding to square powers of harmonic components, like:
        \begin{equation*}
            \int_0^{2\pi} \bar{g}_{k-1} \cdot \bar{g}_{k-1} d\theta = \pi (|\bar{o}_k|^2 + |\bar{q}_k|^2)
        \end{equation*}
	  \item Cross-products of different degree harmonic components, for example:
        \begin{equation*}
            \int_0^{2\pi} \bar{g}_{k} \cdot \bar{f}_{k-2} d\theta
        \end{equation*}
	\end{itemize}
Using the orthogonality properties (\ref{orthog_prop}), the above equations are simplified into:
    \begin{eqnarray*}
    && \pi [(k^2 - (k-2)^2) (|\bar{o}_{k-2}|^2 + |\bar{q}_{k-2}|^2) + \cdots + \\
    && \hspace{3mm}+(k^2 - (k-2s_k)^2) (|\bar{t}_{k-2s_k}|^2 + |\bar{u}_{k-2s_k}|^2)] = 0 \\
    &&\\
    && ((k-1)^2 - (k-3)^2) \pi (|\bar{o}_{k-3}|^2 + |\bar{q}_{k-3}|^2) + \cdots + \\
    && \hspace{3mm}+((k-1)^2 - (k-2s_{k-1})^2) \pi (|\bar{t}_{k-2s_{k-1}}|^2 + |\bar{u}_{k-2s_{k-1}}|^2) + \ldots +\\
    && \hspace{3mm}+(k (k-2) - (k-2)^2)\int_0^{2\pi}\bar{g}_{k-2} \cdot \bar{f}_{k-2}d\theta + \dots = 0\\
    && \hspace{5mm} \vdots \\
    && 8 \pi (|\bar{o}_1|^2 + |\bar{q}_1|^2) + 4 \int_0^{2\pi}\bar{g}_2 \cdot \bar{f}_2 d\theta + 4 \int_0^{2\pi}\bar{h}_1 \cdot \bar{f}_1 d\theta = 0\\
    &&\\
    && 4 \int_0^{2\pi} \bar{g}_0 \cdot \bar{g}_0 d\theta + 2 \int_0^{2\pi} \bar{g}_1 \cdot \bar{f}_1 d\theta = 0
    \end{eqnarray*}

  \item The second type of terms, the cross-products, are removed gradually. Each equation, that represents the coefficient of a different $r$ power, make null the square of the terms that appears on the cross-terms of the next $r$ power.\\

      On the first equation, the square elements must be null, because all the coefficients are positive and could be deduced:
      \begin{equation*}
      \hspace{5mm} \bar{o}_{k-2} = \bar{q}_{k-2} = 0  \hspace{5mm} \text{o} \hspace{5mm} |\bar{g}_{k-2}| = 0  \Rightarrow \bar{g}_{k-2} = 0
      \end{equation*}

      In the second equation the cross-product terms vanish because it contains the product of the harmonic components $g_{k-2}$:
      \begin{equation*}
      \int_0^{2\pi}\bar{g}_{k-2} \cdot \bar{f}_{k-2}d\theta = 0
      \end{equation*}

      If we remove this cross term, the new equation cancel the harmonic coefficients of the next lower degree:
      \begin{equation*}
      \bar{o}_{k-3} = \bar{q}_{k-3} = \cdots = \bar{t}_{k-2s_{k-1}} = \bar{u}_{k-2s_{k-1}} = 0
      \end{equation*}

  \item This procedure is iterated over all the equations, the only not null resulting terms are the square terms of the principal harmonic components. The coefficients of the  squares of non principal harmonic components are positive (it takes the form $k^2 - (k-i)^2$, with $i<k$ and $k>1$) so all the quadratic terms must be null simultaneously:
       $|\bar{g}_i| = \ldots = |\bar{h}_j| = 0$.\\
      \begin{eqnarray*}
        && \int_{0}^{2\pi}\bar{g}_i \cdot \bar{g}_i d\theta = \cdots =\int_{0}^{2\pi}\bar{h}_j \cdot \bar{h}_j d\theta = 0 \Rightarrow\\
        && |\bar{o}_i| = |\bar{q}_i| = \cdots = |\bar{t}_i| = |\bar{u}_i| = 0 \Rightarrow\\
        && \bar{g}_i = \cdots \bar{h}_j = 0
      \end{eqnarray*}
       Therefore, all non principal harmonic components are null. The polynomial parameterization could only contain principal harmonic components. In other words, the function components of the polynomial parameterization must be harmonic.

  \item Now could be applied the conformal parameterization condition (\ref{cond_conformes_polares}) to the harmonic polynomial parameterization and then $r$ powers could be grouped obtaining additional conditions for vector coefficients. The more simple conditions, for the higher, $j=k$, and lower vector coefficients, $j=1$, are:
    \begin{equation*}
    \left\{
      \begin{array}{ll}
       &|\bar{v}_j| = |\bar{w}_j|\\
       &\bar{v}_j \cdot \bar{w}_j = 0
      \end{array}
    \right.
    \end{equation*}
\end{enumerate}
\end{proof}

\begin{rem}
The above proof only uses the first conformal parameterization condition (\ref{cond_conformes_polares}):
\begin{equation*}
\bar{X}_x \cdot \bar{X}_x - \bar{X}_y \cdot \bar{X}_y = 0
\end{equation*}

It could be thought that the second condition:
\begin{equation*}
\bar{X}_x \cdot \bar{X}_y = 0
\end{equation*}
imposes additional restrictions. In general, for non polynomial parameterizations, that is true. In the polynomial case it will be shown that this condition is superfluous.\\

The conformal polynomial parameterization could be written on the complex plane as:
\begin{equation*}
\bar{X}(r,\theta) = \sum_{j=1}^n r^j (\bar{v}_j \cos{j \theta} + \bar{w}_j \sin{j \theta}) = \sum_{j=1}^n \bar{V}_j z^j
\end{equation*}
con $z, \bar{V}_j \in \mathbb{C}$ y $\bar{V}_j = \bar{v}_j - \mathbf{i}\bar{w}_j$.\\

The surface tangent vectors are given by:
\begin{eqnarray*}
&& \bar{X}_x = \bar{X}_z + \bar{X}_{\bar{z}}\\
&& \bar{X}_y = i(\bar{X}_z - \bar{X}_{\bar{z}})
\end{eqnarray*}

The first condition on the complex plane is:
\begin{eqnarray*}
&& \bar{X}_x \cdot \bar{X}_x - \bar{X}_y \cdot \bar{X}_y = 0 \Rightarrow\\
&& \Rightarrow \bar{X}_z \cdot \bar{X}_z + \bar{X}_{\bar{z}} \cdot \bar{X}_{\bar{z}} = 0
\end{eqnarray*}
This condition takes the form:
\begin{equation*}
\bar{X}_z \cdot \bar{X}_z = 0
\end{equation*}

The second conformal parameterization condition on the complex plane is:
\begin{eqnarray*}
&& \bar{X}_x \cdot \bar{X}_y = 0 \Rightarrow\\
&& i(\bar{X}_z \cdot \bar{X}_z - \bar{X}_{\bar{z}} \cdot \bar{X}_{\bar{z}}) = 0\\
&& i(\bar{X}_z \cdot \bar{X}_z) = 0
\end{eqnarray*}
If the parameterization $X$ is harmonic and polynomial, it could be expressed as a polynomial in $z$ variable, see remark \ref{fourier}, i.e., it must be an holomorphic function. 
When $\bar{z}$ derivatives are neglected it could be seen that the two conditions are equivalent.
\end{rem}

\newpage
\section{Geometric applications}
\subsection{Minimal surfaces}
We use the next notation for the elements of the metric or the \emph{first fundamental form}:
\begin{equation*}
\left\{
\begin{array}{ll}
E = \bar{X}_x \cdot \bar{X}_x \\
F = \bar{X}_x \cdot \bar{X}_y =  \bar{X}_y \cdot \bar{X}_x\\
G = \bar{X}_y \cdot \bar{X}_y
\end{array} \right.
\end{equation*}

Using this notation, a surface parameterization is conformal when:
\begin{eqnarray*}
\left\{
\begin{array}{ll}
E = G \\
F = 0
\end{array}
\right.
\end{eqnarray*}

The elements of the \emph{second fundamental form} are:
\begin{equation*}
\left\{
\begin{array}{ll}
e =  \bar{X}_{xx} \cdot (\bar{X}_x \wedge \bar{X}_y)\\
f =  \bar{X}_{xy} \cdot (\bar{X}_x \wedge \bar{X}_y)\\
g =  \bar{X}_{yy} \cdot (\bar{X}_x \wedge \bar{X}_y)
\end{array} \right.
\end{equation*}

The \emph{mean curvature} expression, see for example \cite{Docarmo}, as a function of the first and second fundamental forms is:
\begin{equation}\label{COND_H_NULA1}
H = \frac{1}{2}\frac{Eg - 2fF + Ge}{EG - F^2}
\end{equation}

The following geometric result is obtained from the theorem \ref{th_param_conf_armonica} of the previous section.\\

\begin{cor}\label{corolario_sup_min}
Every Riemannian surface $M$ in $\mathbb{R}^n$ that admits a conformal polynomial parameterization must be a minimal surface.
\end{cor}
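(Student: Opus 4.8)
The strategy is to feed the structural conclusion of Theorem \ref{th_param_conf_armonica} into the mean-curvature formula \eqref{COND_H_NULA1} and check that the numerator $Eg - 2fF + Ge$ vanishes identically. Since the parameterization is conformal we already have $F = 0$ and $E = G$, so the denominator is $E^2$ (nonzero wherever the parameterization is an immersion) and the numerator collapses to $E(e+g)$. Hence it suffices to prove $e + g = 0$, i.e. $(\bar X_{xx} + \bar X_{yy})\cdot(\bar X_x \wedge \bar X_y) = 0$. But Theorem \ref{th_param_conf_armonica} tells us each component $X_i$ of $\bar X$ is harmonic, so $\bar X_{xx} + \bar X_{yy} = \Delta \bar X = \bar 0$ as a vector. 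Therefore $e + g = \bar 0 \cdot (\bar X_x \wedge \bar X_y) = 0$, and consequently $H \equiv 0$, which is exactly the condition that $M$ be a minimal surface.

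I would organize the write-up in three short steps. First, invoke Theorem \ref{th_param_conf_armonica} to record that $\bar X$ is harmonic componentwise, hence $\Delta \bar X = \bar 0$. Second, simplify \eqref{COND_H_NULA1} using the conformal relations $F=0$, $E=G$ to obtain $H = \dfrac{e+g}{2E}$. Third, observe $e + g = \bar X_{xx}\cdot(\bar X_x\wedge\bar X_y) + \bar X_{yy}\cdot(\bar X_x\wedge\bar X_y) = (\Delta\bar X)\cdot(\bar X_x\wedge\bar X_y) = 0$, so $H\equiv 0$ and $M$ is minimal.

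The only genuine subtlety — not really an obstacle but worth a sentence — is the regularity/immersion point: formula \eqref{COND_H_NULA1} and the very notion of mean curvature require $EG - F^2 = E^2 \neq 0$, i.e. that $\bar X$ is a regular parameterization away from a set of isolated singular points; on that open dense set the computation above shows $H=0$, and minimality is then a property of the (regular part of the) surface. For a polynomial conformal immersion this is automatic except at branch points, and one should simply remark that the argument is carried out on the regular locus, exactly as in the classical Weierstrass–Enneper picture where harmonic conformal maps are precisely the (branched) minimal immersions.
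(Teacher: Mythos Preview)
Your proof is correct and follows essentially the same route as the paper: invoke Theorem~\ref{th_param_conf_armonica} to conclude $\Delta\bar X=0$, use $E=G$ and $F=0$ to reduce \eqref{COND_H_NULA1} to $H=(e+g)/(2E)$, and then note $e+g=(\Delta\bar X)\cdot(\bar X_x\wedge\bar X_y)=0$. Your additional remark about restricting to the regular locus (away from branch points) is a welcome clarification that the paper omits.
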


\begin{proof}[Proof]
Let $\bar{X}$ be a conformal parameterization. The theorem \ref{th_param_conf_armonica} states that every conformal polynomial parameterization must be also harmonic, $\Delta \bar{X}=0$

In the conformal parameterization case, because $E = G, F = 0$, the mean curvature is given by:
\begin{eqnarray}\label{COND_H_NULA2}
&&H = \frac{1}{2}\frac{Eg + Ge}{EG} = \frac{1}{2}\frac{g + e}{E}= \\
&&= \frac{1}{2}\frac{(\bar{X}_{uu} + \bar{X}_{vv}) \cdot (\bar{X}_x \wedge \bar{X}_y)}{\bar{X}_x \cdot \bar{X}_x} = \frac1{2} \Delta \bar{X} \cdot \frac{\bar{N}}{E}
\end{eqnarray}

The fact that the parameterization has to be harmonic, in all of his components, implies that surface mean curvature must be null, i.e. the surface must be minimal.
\end{proof}

\begin{rem}
The use of conformal parameters is very common in the resolution of physical and engineering problems because many describing differential equations are simplified making use of this kind of surface coordinates (the local existence of conformal coordinates is guaranteed as can be seen in \cite{Docarmo} or \cite{Duv}). In addition to this, many times the solutions are approximated by a Taylor polynomial, or any other polynomial series expansion, around a point.\\

Using the above result, this kind of aproximmations are really minimal surface approximations.\\

In the next section it will be seen that this condition is still more restrictive for conformal polynomic hypersurfaces.
\end{rem}

\subsection{Conformal spinorial surface representation}
The Weierstrass-Enneper surface representation is used to generate minimal surface conformal parameter based on two complex functions.\\

This idea has been extended to obtain corformal parameters of any kind of surface, not only the minimal ones, in $\mathbb{R}^3$ and $\mathbb{R}^4$. See \cite{Tai}, \cite{Kam},  \cite{Kus}, \cite{KKP}, \cite{Sul}, \cite{Kam2} for a detailed description.\\

The spinorial differential equations used to obtain the components of the conformal parameter, see \cite{Tai}, are:
\begin{equation*}
\mathcal{D}\psi = 0
\end{equation*}
where $\mathcal{D}$ corresponds to the complex bidimensional Dirac operator and $A$ is a  real scalar potential:
\begin{equation*}
\mathcal{D} =
\left( {\begin{array}{cc}
0 & \partial_z \\
-\partial_{\bar{z}} & 0 \\
 \end{array} } \right)
 +
\left( {\begin{array}{cc}
                A & 0 \\
                0 & A \\
 \end{array} } \right)
\end{equation*}

As noted by \cite{Tai} this equations corresponds to the stationary Dirac equation in presence of a external scalar electromagnetic field. The theorem \ref{th_param_conf_armonica} implies that  there are no polynomial solution for the above equation with non null potentials. In other words, there are non polynomial spinors (the components of the stationary wave function) when potential $A$ is not null.\\

Homogeneous system, $A=0$, corresponds to spinorial representation of minimal surfaces.

\subsection{A rational conformal counterexample}
The theorem \ref{th_param_conf_armonica} establish that every conformal polynomial parameterization of a embedded surface in $\mathbb{R}^n$ must be harmonic and, by the corollary \ref{corolario_sup_min}, it also must be a minimal surface.\\

That is not the case for conformal rational polynomial parameterizations, where the components are quotients of polynomials. One counterexample could be found using a conformal transformation of a known conformal polynomial parameterization, for example the Enneper minimal surface.\\

The \emph{conformal transformation group} is the transformation group that conformally changes $\mathbb{R}^n$. For $n>2$, as the Liouville theorem states, this group is composed by translations, scale transformations, rotations and special conformal group transformations, $SCG(\mathbb{R}^n)$.\\

The special conformal group, $SCG(\mathbb{R}^n)$, is a subgroup which elements could be expressed as the composition of a inversion, $R$, a translation by a vector $\bar{a}$, called $T(\bar{a})$ and a new radix inversion $R$. In other words, for every $S \in SCG(\mathbb{R}^n)$ exist a vector $\bar{a}$ that satisfies $S = R \cdot T(\bar{a}) \cdot R$.\\

This kind of conformal transformation could be applied to a conformal parameterization of the Enneper minimal surface to obtain a new conformal parameterization of a different surface in $\mathbb{R}^n$. The new conformal surface is not necessarily minimal although the original surface is minimal because the mean curvature $H$ is not a conformal invariant (not as the Willmore integrand, $(H^2 - K)d\sigma$, where $d\sigma$ is the area differential, that is conformally invariant).\\

A lot of counterexamples of conformal rational polynomial surface parameterizations could be obtained with not everywhere null curvature, i.e., the new surfaces are not minimal and also not harmonic. There are also non Willmore surface examples obtained using interesting spinorial technics that will not be included here.

\newpage
\section{A general theorem for hypersurfaces}
The idea of the theorem \ref{th_param_conf_armonica} could be generalized to $m$-dimensional hypersurfaces in $\mathbb{R}^n$. It will be seen that there are rigidity conditions, as restrictive as the established by the Liouville theorem.\\

The classical Liouville theorem states that every conformal transformation of a space region in $\mathbb{R}^n$, with $n > 2$, could be expressed as a composition of some of the following operations: inversions, translations, rotations y scale transformations. For a proof see for example \cite{Bla} or \cite{Spi}, vol 3.\\

In fact, it will be shown that the only conformal polynomial parameterizations of a hypersurface must be composed lineal polynomials. In other words, every conformal polynomial parameterization of a $m$-dimensional hypersurface, embedded in $\mathbb{R}^n$, is essentially a rotation, translation or scale transformation of the cartesian framework.

\begin{thm}
Every conformal polynomic parameterization of a $m$-dimensional surface, with $m > 2$, embedded in $\mathbb{R}^n$, must be lineal, i.e., it must be a hyperplane.\\

In other words, the surface parameterization must be a lineal conformal transformation (rotations, translations or scale transformations) of the $m$-dimensional cartesian framework.
\end{thm}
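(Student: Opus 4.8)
The plan is to follow the strategy of Theorem~\ref{th_param_conf_armonica}, but to extract a \emph{sign} contradiction that is only available when $m>2$. Write the conformality hypothesis as $\bar{X}_{x_i}\cdot\bar{X}_{x_j}=\lambda^{2}\delta_{ij}$ for all $i,j\in\{1,\dots,m\}$ (equivalently, all $|\bar{X}_{x_i}|^{2}$ coincide and $\bar{X}_{x_i}\cdot\bar{X}_{x_j}=0$ for $i\neq j$), and split $\bar{X}=\bar{P}_{k}+\bar{P}_{k-1}+\cdots+\bar{P}_{0}$ into homogeneous components with $\bar{P}_{k}\neq 0$, $k\geq 1$. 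Comparing the homogeneous parts of degree $2k-2$ on both sides of these identities shows that the leading component is itself conformal: $\partial_{i}\bar{P}_{k}\cdot\partial_{j}\bar{P}_{k}=\mu\,\delta_{ij}$, where $\mu:=|\partial_{1}\bar{P}_{k}|^{2}\in\mathcal{P}_{2k-2}(\mathbb{R}^{m})$. It is therefore enough to prove $k\leq 1$: then $\bar{X}$ is an affine polynomial, and an affine conformal map is exactly a composition of a translation, an orthogonal injection and a dilation, whose image is an affine $m$-plane.

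First I would record two polynomial identities for $\bar{P}_{k}=(P^{(1)},\dots,P^{(n)})$. Put $v^{i}:=\partial_{i}\bar{P}_{k}$; Euler's identity gives $\sum_{i}x_{i}v^{i}=k\,\bar{P}_{k}$, so
\begin{equation*}
\partial_{j}\bigl(|\bar{P}_{k}|^{2}\bigr)=2\,\bar{P}_{k}\cdot v^{j}=\tfrac{2}{k}\sum_{i}x_{i}\,(v^{i}\cdot v^{j})=\tfrac{2}{k}\,\mu\,x_{j}.
\end{equation*}
Thus $\nabla(|\bar{P}_{k}|^{2})$ is everywhere parallel to the position vector, so $|\bar{P}_{k}|^{2}$ is constant on spheres centred at the origin; being homogeneous of degree $2k$ it must equal $c^{2}r^{2k}$, with $c\neq 0$ since $\bar{P}_{k}\not\equiv 0$. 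Comparing gradients then gives $\mu=k^{2}c^{2}r^{2k-2}$, i.e.
\begin{equation*}
\sum_{a=1}^{n}\partial_{i}P^{(a)}\,\partial_{j}P^{(a)}=k^{2}c^{2}\,r^{2k-2}\,\delta_{ij}.
\end{equation*}

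Next I would compute $\tfrac12\Delta(|\bar{P}_{k}|^{2})$ in two ways. On one hand $\tfrac12\Delta(c^{2}r^{2k})=c^{2}k(2k+m-2)\,r^{2k-2}$. On the other, by the product rule $\tfrac12\Delta(|\bar{P}_{k}|^{2})=\sum_{a}|\nabla P^{(a)}|^{2}+\sum_{a}P^{(a)}\Delta P^{(a)}$, and $\sum_{a}|\nabla P^{(a)}|^{2}=\sum_{i}\sum_{a}(\partial_{i}P^{(a)})^{2}=mk^{2}c^{2}r^{2k-2}$ by the previous identity. Hence
\begin{equation*}
\sum_{a=1}^{n}P^{(a)}\,\Delta P^{(a)}=c^{2}k\bigl[(2k+m-2)-mk\bigr]r^{2k-2}=c^{2}k(m-2)(1-k)\,r^{2k-2}.
\end{equation*}
For $m>2$ and $k\geq 2$ the right side is a strictly negative multiple of $r^{2k-2}$, so $\int_{S^{m-1}}\sum_{a}P^{(a)}\Delta P^{(a)}\,d\sigma<0$. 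This contradicts the elementary inequality $\int_{S^{m-1}}Q\,\Delta Q\,d\sigma\geq 0$, valid for every $Q\in\mathcal{P}_{k}(\mathbb{R}^{m})$: decomposing $Q=h_{k}+r^{2}h_{k-2}+\cdots$ by Theorem~\ref{th_desc_homog}, using $\Delta(r^{2j}h_{\ell})=2j(2\ell+2j+m-2)r^{2j-2}h_{\ell}$ and the $L^{2}(S^{m-1})$-orthogonality of spherical harmonics of distinct degrees, one gets $\int_{S^{m-1}}Q\,\Delta Q\,d\sigma=\sum_{j\geq 1}2j\,(2(k-2j)+2j+m-2)\,\|h_{k-2j}\|_{L^{2}(S^{m-1})}^{2}\geq 0$. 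Summing over $a$ gives the contradiction, so $k\leq 1$, as required.

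The steps I expect to be delicate are, first, the bookkeeping in the reduction (checking that the degree-$(2k-2)$ part of the full system of conformal equations really isolates the conformality of $\bar{P}_{k}$, for a genuinely mixed-degree $\bar{X}$), and second — and this is the crux — getting the sign right in $\sum_{a}P^{(a)}\Delta P^{(a)}=c^{2}k(m-2)(1-k)r^{2k-2}$: the whole theorem turns on the algebraic identity $(2k+m-2)-mk=(m-2)(1-k)$ being negative precisely when $m>2$ and $k\geq 2$, whereas in the surface case $m=2$ it vanishes, recovering harmonicity rather than linearity. It is worth checking the degenerate cases $k=0,1$ directly (there the right side is $\geq 0$, consistently with affine maps being conformal), and noting that, unlike in Theorem~\ref{th_param_conf_armonica}, both kinds of conformal condition do real work here: the off-diagonal equations $v^{i}\cdot v^{j}=0$ enter through $|\bar{P}_{k}|^{2}=c^{2}r^{2k}$, and the diagonal ones $|v^{i}|^{2}=\mu$ through $\sum_{a}|\nabla P^{(a)}|^{2}=mk^{2}c^{2}r^{2k-2}$. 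One could also avoid spherical harmonics by comparing $\int_{S^{m-1}}|\nabla_{S^{m-1}}Q|^{2}$ with $k(k+m-2)\int_{S^{m-1}}Q^{2}$ via the eigenvalues $\ell(\ell+m-2)$ of $-\Delta_{S^{m-1}}$, but the decomposition of Theorem~\ref{th_desc_homog} already at hand makes the route above the shortest.
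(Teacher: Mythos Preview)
Your argument is correct and takes a genuinely different route from the paper's. The paper does not analyse the leading homogeneous component directly; instead it composes the given $\bar\psi:\mathbb{R}^m\to\mathbb{R}^n$ with conformal polynomial maps $\bar\phi:\mathbb{R}^2\to\mathbb{R}^m$, invokes Theorem~\ref{th_param_conf_armonica} to conclude that the composite $\bar X=\bar\psi\circ\bar\phi$ is harmonic, and then, taking $\bar\phi(x,y)=\bar\lambda x+\bar\beta y$ linear with $|\bar\lambda|=|\bar\beta|$ and $\bar\lambda\cdot\bar\beta=0$, obtains $\bar\lambda\cdot A\cdot\bar\lambda+\bar\beta\cdot A\cdot\bar\beta=0$ at every point, where $A=\mathrm{Hess}(\psi^i)|_p$. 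Choosing $\bar\lambda,\bar\beta$ among the eigenvectors of the symmetric matrix $A$ yields $\lambda_i+\lambda_j=0$ for all $i\neq j$, which for $m>2$ forces all eigenvalues to vanish, hence $\mathrm{Hess}(\psi^i)\equiv 0$ and linearity. Your approach is self-contained---it does not lean on the two-dimensional Theorem~\ref{th_param_conf_armonica}---and the identity $(2k+m-2)-mk=(m-2)(1-k)$ makes the bifurcation between $m=2$ (harmonicity) and $m>2$ (linearity) completely explicit; the paper's approach, by contrast, is a slicing argument that reduces the $m$-dimensional problem to the surface case already solved, trading self-containment for a very short endgame in elementary linear algebra. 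Your remark that both the diagonal and off-diagonal conformal conditions do real work here (through $|\bar P_k|^2=c^2r^{2k}$ and $\sum_a|\nabla P^{(a)}|^2=mk^2c^2r^{2k-2}$ respectively) is a structural point the paper does not isolate.
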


\begin{proof}[Proof]
Let $\bar{\psi}$ be a conformal polynomial parameterization of a $m$-dimensional surface embedded in $\mathbb{R}^n$ and let $\bar{\phi}$ be a conformal polynomial parameterization of a bidimensional surface embedded in $\mathbb{R}^n$. The two conformal polynomial parameterizations are:
\begin{equation*}
\left\{
  \begin{array}{ll}
   &\bar{\phi}(x, y):\mathbb{R}^2 \rightarrow \mathbb{R}^m \\
   &\bar{\psi}(x_1, \ldots,x_m):\mathbb{R}^m \rightarrow \mathbb{R}^n
  \end{array}
\right.
\end{equation*}

The composition of both maps, $\bar{X}: \mathbb{R}^2 \rightarrow \mathbb{R}^n$, must be a conformal map too and it is a conformal polynomial parameterization of a bidimensional surface in $\mathbb{R}^n$. The theorem \ref{th_param_conf_armonica} establish that this parameterization must also be harmonic. Thus every component, $X_i$, of the parameterization:
\begin{equation*}
 X^i = \psi^i(\phi^1(x, y), \phi^2(x, y), \ldots, \phi^m(x, y))
\end{equation*}
must be harmonic, $\Delta X^i = 0$.\\

The laplacian of each component could be calculated explicitly:
\begin{eqnarray}
&& X^i_x = \sum_{j=1}^m \psi^i_j\bigg|_{\bar{\phi}} \phi^j_x \\
&& X^i_{xx} = \sum_{j,k=1}^m  \psi^i_{jk}\bigg|_{\bar{\phi}} \phi^j_x \phi^k_x + \sum_{j=1}^m \psi^i_j\bigg|_{\bar{\psi}} \phi^j_{xx}\\
&& X^i_{yy} = \sum_{j,k=1}^m  \psi^i_{jk}\bigg|_{\bar{\phi}} \phi^j_y \phi^k_y + \sum_{j=1}^m \psi^i_j\bigg|_{\bar{\psi}} \phi^j_{yy}\\
&& \Delta X^i = \sum_{j,k=1}^m  \psi^i_{jk}\bigg|_{\bar{\phi}} \phi^j_x \phi^k_x + \sum_{j,k=1}^m  \psi^i_{jk}\bigg|_{\bar{\phi}} \phi^j_y \phi^k_y + \sum_{j=1}^m \psi^i_j\bigg|_{\bar{\psi}} \Delta \phi^j = \\
&& \Delta X^i = \sum_{j,k=1}^m  \psi^i_{jk}\bigg|_{\bar{\phi}} \phi^j_x \phi^k_x + \sum_{j,k=1}^m  \psi^i_{jk}\bigg|_{\bar{\phi}} \phi^j_y \phi^k_y
\end{eqnarray}
The term $\Delta \phi^j$ could be removed because the components are conformal and polynomic and has to be harmonic.\\

Finally, the laplacian of the parameter components $X^i$ is:
\begin{equation}\label{hess}
\Delta X^i = \bar{\phi_x} \cdot Hess(\psi^i) \cdot \bar{\phi_x} + \bar{\phi_y} \cdot Hess(\psi^i) \cdot \bar{\phi_y}
\end{equation}
where $Hess(\psi^i)$ is the hessian matrix of the $\psi$ parameterization $i$ component.\\

The previous relation must be true for every conformal parameterization $\bar{\phi}$, of a surface in $\mathbb{R}^m$, used on the composition $\bar{X} = \bar{\psi}(\bar{\phi}(x,y))$. The simpler polynomial parameterization is a lineal one:
\begin{equation*}
\bar{\phi}(x,y) = \bar{\lambda}x +\bar{\beta}y
\end{equation*}
In order to be conformal it must satisfy:
\begin{equation*}
\left\{
  \begin{array}{ll}
   &|\bar{\lambda}| = |\bar{\beta}| \\
   &\bar{\lambda} \cdot \bar{\beta} = 0
  \end{array}
\right.
\end{equation*}
The relation (\ref{hess}) must be satisfied at every point $p \in \mathbb{R}^m$:
\begin{equation}\label{rel_alg_lineal}
\Delta X^i\big|_p = \bar{\lambda} \cdot A \cdot \bar{\lambda} + \bar{\beta} \cdot A \cdot \bar{\beta} = 0
\end{equation}
with $A \equiv Hess(\psi^i)\big|_p$, where $A$ is a symmetric matrix because it symbolize the Hessian matrix evaluated at the point $p$. The later equation is equivalent to the projection of a bilinear form represented by the symmetric matrix $A$ into the plane formed by the vectors $\bar{\lambda}, \bar{\beta}$. The relation must be true for every conformal parameterization $\phi$, the pair of vectors $\{\bar{\lambda},\bar{\beta}\}$ could be chosen to be $A$ matrix eigenvectors, $\{\bar{v_1}, \bar{v_2}\}$.\\

The only requirement for the vectors $\bar{\lambda}, \bar{\beta}$ is that they have to be orthogonal and of identical size. For examble, it could be taken unitary. This condition is fulfilled by the  matrix $A$ eigenvectors because $A$ is real and symmetric. The spectral theorem for finite spaces states that every real symmetric matrix could be diagonalized in a orthogonal basis and the eigenvalues must be real numbers.\\

Using the eigenvectors, the equation (\ref{rel_alg_lineal}) gives:
\begin{equation}
\Delta X^i\bigg|_p = \bar{v_1} \cdot A \cdot \bar{v_1} + \bar{v_2} \cdot A \cdot \bar{v_2} = (\lambda_1 + \lambda_2)|\bar{v_1}| = 0
\end{equation}
where $\lambda_i$ is the eigenvalue associated to the eigenvector $v_i$.\\

The same reasoning could be applied to the other $m$ eigenvectors of the $A$ matrix:
\begin{equation*}
\lambda_i + \lambda_j = 0 \hspace{5mm} \forall i \neq j  \hspace{5mm} i,j = 1, \ldots, m\\
\end{equation*}

This lineal equation system has only null solution for $m > 2$. In other words, the matrix $A$ must be null, or equivalently, $Hess(\psi^i) = 0$ at every point $p$. All the second order derivatives and the second order cross derivatives of the parameterization components must be null and thus the conformal polynomial parameterization must be lineal.\\
\end{proof}

\begin{rem}
The above result is coherent with the Liouville theorem, when the dimension values $m,n$ are the same. In the case $n = m$, the only conformal polynomial transformations, from $\mathbb{R}^m$ to $\mathbb{R}^m$, allowed by the Liouville theorem are the lineal ones. This lineal transformations corresponds to the composition of rotations, scale transformations and translations. The special conformal subgroup transformations could not be used because it contains radix inversions that are not polynomial transformations.\\
\end{rem}

\newpage
\bibliographystyle{amsplain}

\end{document}